\NeedsTeXFormat{LaTeX2e}
\documentclass[12pt,a4paper]{article}      

\usepackage{graphicx}
\usepackage{enumerate}
\usepackage{amssymb, amsmath, amsthm}

\newtheorem{satz}{Theorem}[section]
\newtheorem{lem}[satz]{Lemma} 
\newtheorem{proposition}[satz]{Proposition}

\newtheorem{bsp}[satz]{Example}

\usepackage[T1]{fontenc}
\usepackage{lmodern}

\newcommand{\N}{\ensuremath{{\mathbb N}}}

\newcommand{\R}{\ensuremath{{\mathbb R}}}
\newcommand{\e}{\varepsilon}

\newcommand{\Pv}{\mathbb{P}}
\newcommand{\Ex}{\mathbb{E}}

\newcommand{\norm}[1]{\left \lVert#1 \right\rVert}

\begin{document}

\title{On the Expectation of the Norm of Random Matrices with Non-Identically Distributed Entries}
\author{Stiene Riemer \thanks{Christian-Albrechts-Universit\"at, Mathematisches Seminar, Kiel, Germany,\newline email: \textit{lastname}@math.uni-kiel.de.} \and Carsten Sch\"utt \footnotemark[1]}
\date{\today}
\maketitle

\begin{abstract} Let $X_{i,j}$, $i,j=1,...,n$, be independent, not necessarily identically distributed random variables with finite first moments. We give estimates for the expectation of the norm of the random matrix $(X_{i,j})_{i,j=1}^n$.  We improve a result by R. Latala.
\end{abstract}

\textbf{Keywords:} {Random Matrix, Largest Singular Value, Orlicz Norm}\\[.2cm]

\section{Introduction and Notation}\label{intro} 
We study the order of magnitude of the expectation of the largest singular value, i.e. the norm of random matrices with independent entries 
$$
\Ex\left(\norm{(a_{i,j}g_{i,j})_{i,j=1}^n}_{2\rightarrow 2}\right),
$$
where $a_{i,j}\in\mathbb R$, $i,j=1,\dots,n$, $g_{i,j}$, $i,j=1,\dots,n$, are standard Gaussian random variables
and $\|\ \|_{2\to2}$ is the operator norm on $\ell_{2}^{n}$. There are two cases
with a complete answer. Chevet \cite{Che} showed for matrices satisfying
$a_{i,j}=a_{i}b_{j}$ that the expectation is 
proportional to  
$$
\|a\|_{2}\|b\|_{\infty}+\|a\|_{\infty}\|b\|_{2},
$$ 
where $\|a\|_{2}$ denotes the Euclidean of $a=(a_{1},\dots,a_{n})$ and
$\|a\|_{\infty}=\max_{1\leq i\leq n}|a_{i}|$.
\par
For diagonal matrices with diagonal elements
$d_{1},\dots,d_{n}$ we have
that the expectation of the norm is of the order the Orlicz norm
$\|(d_{1},\dots,d_{n})\|_{M}$ where the Orlicz function is given by
$M(s)=\sqrt{\frac{2}{\pi}}\int_{0}^{s}e^{-\frac{1}{2t^{2}}}dt$
\cite{GLSW1}. This Orlicz norm is up to a logarithm of $n$ equal to
to the norm $\max_{1\leq i\leq n}|d_{i}|$.
\par
These two cases are of very different structure and seem to present
essentially what might occur concerning the structure of matrices.
This leads us to conjecture that the expectation for arbitrary matrices is up to a logarithmic
factor equal to
\begin{equation}\label{TrivEst}
\max\limits_{i=1,...,n}\norm{(a_{i,j})_{j=1}^n}_{2}+\max\limits_{j=1,...,n}\norm{(a_{i,j})_{i=1}^n}_{2}.
\end{equation}
\par
Latala \cite{Lat} showed for arbitrary matrices
$$
\Ex\left(\norm{(a_{i,j}g_{i,j})_{i,j=1}^n}_{2\rightarrow 2}\right)\lesssim\max\limits_{i=1,...,n}\norm{(a_{i,j})_{j=1}^n}_{2}+\max\limits_{j=1,...,n}\norm{(a_{i,j})_{i=1}^n}_{2}+\norm{(a_{i,j})_{i,j=1}^n}_4.
$$
Seginer \cite{Seg} showed for any $n\times m$ random matrix $(X_{i,j})_{i,j=1}^{n,m}$
of independent identically distributed random variables
$$
\mathbb E\left\|(X_{i,j})_{i,j=1}^{n,m}\right\|_{2\to2}
\leq c\left(\mathbb E\max_{1\leq i\leq n}\left\|(X_{i,j})_{j=1}^{m}\right\|_{2}
+\mathbb E\max_{1\leq i\leq n}\left\|(X_{i,j})_{j=1}^{m}\right\|_{2}\right).
$$
The largest singular value was first investigated by \cite{Silv, YBK}.
The behavior of the smallest singular value has been determined
in \cite{AGLPT, RV,RV1}.
\vskip 3mm

\begin{satz}\label{thm:gauss_part1gesamt} 
There is a constant $c>0$ such that for all
$a_{i,j}\in\R$, $i,j=1,...,n$, and all independent standard Gaussian random variables $g_{i,j}$, $i,j=1,...,n$, 
 \begin{eqnarray*}
	\left.\begin{split}
	&\Ex\left(\norm{(a_{i,j}g_{i,j})_{i,j=1}^n}_{2\rightarrow 2}\right)  \\
	&\leq c\left(\ln \left(e\frac{\norm{(a_{i,j})_{i,j=1}^n}_1}{\norm{(a_{i,j})_{i,j=1}^n}_{\infty}}\right)\right)^{}\left(\Ex\left(\max\limits_{i=1,...,n}\norm{(a_{i,j}g_{i,j})_{j=1}^n}_2\right)	+\Ex\left(\max\limits_{j=1,...,n}\norm{(a_{i,j}g_{i,j})_{i=1}^n}_2\right)\right).
	\end{split}\right.
	\end{eqnarray*}
\end{satz}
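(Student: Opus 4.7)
The plan is to perform a dyadic decomposition of $(a_{i,j})$ by magnitude and to prove a Seginer-type estimate on each of the resulting ``uniform-magnitude'' blocks; summing over these blocks produces the logarithmic factor.

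By homogeneity we may assume $\norm{(a_{i,j})}_{\infty}=1$; write $N:=\norm{(a_{i,j})}_{1}$, so the logarithm in the statement is $\log(eN)$. Partition the indices into dyadic bands
$$E_{k}:=\{(i,j): 2^{-k-1}<|a_{i,j}|\leq 2^{-k}\},\qquad k=0,1,2,\dots,$$
and let $B^{(k)}$ be the matrix obtained from $(a_{i,j}g_{i,j})$ by zeroing out entries outside $E_{k}$. Set $K:=\lceil\log_{2}(eN)\rceil$, so that for $k>K$ the surviving entries have magnitude at most $1/(2eN)$. The tail $\sum_{k>K}B^{(k)}$ is controlled by its Hilbert--Schmidt norm,
$$\Ex\Bigl\|\sum_{k>K}B^{(k)}\Bigr\|_{2\rightarrow 2}^{2}\leq\sum_{|a_{i,j}|\leq 1/(2eN)}a_{i,j}^{2}\leq\tfrac{1}{2eN}\norm{(a_{i,j})}_{1}=\tfrac{1}{2e},$$
and this $O(1)$ contribution is absorbed into the right-hand side, since the latter is at least $\norm{(a_{i,j})}_{\infty}\Ex|g|=\sqrt{2/\pi}$.

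The main step is to prove, for each of the $K+1=O(\log(eN))$ retained bands, the Seginer-type inequality
$$\Ex\norm{B^{(k)}}_{2\rightarrow 2}\leq c\Bigl(\Ex\max_{i}\norm{B^{(k)}_{i,\cdot}}_{2}+\Ex\max_{j}\norm{B^{(k)}_{\cdot,j}}_{2}\Bigr)$$
\emph{without} any further logarithmic factor. This is plausible because on $E_{k}$ all coefficients are comparable (within a factor of $2$), so $B^{(k)}$ behaves like a scaled, $0/1$-masked matrix of iid Gaussians; this is precisely the regime where Latala's extra $\norm{(a_{i,j})}_{4}$-term is dominated by the two row/column terms and hence not needed. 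Using the monotonicity $|B^{(k)}_{i,j}|\leq|a_{i,j}g_{i,j}|$ to replace each band's row and column norms by the corresponding quantities for the full matrix, and summing over the $K+1$ bands together with the tail, gives the bound with the claimed factor $\log(e\norm{(a_{i,j})}_{1}/\norm{(a_{i,j})}_{\infty})$.

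The hard part is the per-band Seginer-type estimate. Seginer's original argument crucially uses that the entries are iid, whereas here only the nonzero entries of $B^{(k)}$ are iid while the support pattern $E_{k}$ is arbitrary. One has either to adapt Seginer's proof to handle deterministic zero patterns, or else to decompose $E_{k}$ combinatorially into submatrices with controlled row and column degrees on which a Chevet-type product estimate applies; the subtle point in the latter approach is to keep the number of pieces in the decomposition bounded independently of $n$, so that no additional logarithmic factor is picked up at this step.
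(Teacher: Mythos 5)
Your overall architecture (dyadic bands in the coefficient magnitude, Hilbert--Schmidt control of the tail bands, absorption of the $O(1)$ tail because the right-hand side is at least $\sqrt{2/\pi}\,\norm{(a_{i,j})}_\infty$) matches the paper's reduction, and those steps are fine. The gap is the step you yourself flag as ``the hard part'': the claim that each uniform-magnitude band $B^{(k)}$ satisfies a Seginer-type bound $\Ex\norm{B^{(k)}}_{2\to2}\leq c\,(\Ex\max_i\norm{B^{(k)}_{i,\cdot}}_2+\Ex\max_j\norm{B^{(k)}_{\cdot,j}}_2)$ with \emph{no} logarithmic factor. This is not proved in your proposal, and it is not a routine adaptation: for an arbitrary $0/1$ support pattern with comparable nonzero entries this statement is essentially Latala's conjecture (in expectation form) restricted to uniform-magnitude coefficients, which is a genuinely deep result (resolved only much later by Latala, van Handel and Youssef) and is not accessible by Seginer's i.i.d.\ argument, since the zero pattern destroys the exchangeability his proof uses. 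Moreover, your plausibility argument is incorrect: it is not true that in the uniform-magnitude regime Latala's $\norm{(a_{i,j})}_4$ term is dominated by the row/column terms. Take the band to be a permutation mask with entries of size $2^{-k}$: then $\norm{(a_{i,j})}_4=2^{-k}n^{1/4}$, while both expected maximal row and column norms are of order $2^{-k}\sqrt{\ln n}$, so Latala's inequality gives nothing like the log-free bound you need (the bound itself happens to be true there, but not for the reason you give). So the central lemma of your proof is missing, and the suggested routes (adapting Seginer, or a combinatorial decomposition into bounded-degree pieces with a bounded number of pieces) are exactly where the difficulty lies.

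The paper avoids this issue entirely by proving a weaker but elementary estimate and applying it only once. Proposition \ref{prop:mitn}, obtained from the explicit net $S_T^{n-1}$ of Lemma \ref{lem:netz} together with Gaussian concentration (\ref{GaussConcPi}), gives $\Ex\norm{G}_{2\to2}\leq c\,\ln(em)\,\Ex(\max_i\norm{\cdot}_2+\max_j\norm{\cdot}_2)$ whenever the nonzero coefficients sit inside an $m\times m$ submatrix; the dyadic decomposition is then used not to sum log-free band estimates but to show that all bands with $k\leq 2\gamma$ (where $2^\gamma=\norm{a}_1/\norm{a}_\infty$) have their support inside a submatrix of size at most $(\norm{a}_1/\norm{a}_\infty)^4$, while the bands $k>2\gamma$ are negligible by Hilbert--Schmidt. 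Thus the logarithm in the theorem comes from a single application of Proposition \ref{prop:mitn} to the union of the significant bands, with $m$ of polynomial size in $\norm{a}_1/\norm{a}_\infty$ --- not from the number of bands, and no log-free structured Seginer theorem is ever needed. To repair your write-up along the paper's lines, you would replace your unproven per-band claim by such a net-plus-concentration estimate with a $\ln(em)$ factor tied to the support size, and apply it once to $\sum_{k\leq 2\gamma}B^{(k)}$.
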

\vskip 3mm

In the same way we prove Theorem \ref{thm:gauss_part1gesamt} we can show the similar formula
\begin{eqnarray*}
	&&\Ex\left(\norm{(a_{i,j}g_{i,j})_{i,j=1}^n}_{2\rightarrow 2}\right)  \\
	&&\leq c\left(\ln\left(e\frac{\norm{(a_{i,j})_{i,j=1}^n}_1}{\norm{(a_{i,j})_{i,j=1}^n}_{\infty}}\right)\right)^{\frac{3}{2}}\left(\max\limits_{i=1,...,n}\norm{(a_{i,j})_{j=1}^n}_2				+\max\limits_{j=1,...,n}\norm{(a_{i,j})_{i=1}^n}_2\right).
		\end{eqnarray*}
This inequality is generalized to arbitrary random variables as in \cite{Lat}.
\vskip 3mm

\begin{satz}\label{thm:allgemein_1} Let $X_{i,j}$, $i,j=1,...,n$, be independent, mean zero random variables. Then
$$
E\left(\norm{(X_{i,j})_{i,j=1}^n}_{2\rightarrow 2}\right)
\leq c\left(\ln(n)\right)^{\frac{3}{2}}\left(\Ex\max\limits_{i=1,...,n}\norm{(X_{i,j})_{j=1}^n}_2+\Ex\max\limits_{j=1,...,n}\norm{(X_{i,j})_{i=1}^n}_2\right).$$
\end{satz}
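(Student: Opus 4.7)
The plan is to reduce Theorem \ref{thm:allgemein_1} to the Gaussian result, specifically to the deterministic variant of Theorem \ref{thm:gauss_part1gesamt} stated just below it (with exponent $3/2$ and right hand side $\max_i\norm{(a_{i,j})_j}_2+\max_j\norm{(a_{i,j})_i}_2$), via standard symmetrization and replacement of Rademacher signs by Gaussians. The only additional ingredient is a deterministic combinatorial bound on the logarithmic factor.

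First, since the $X_{i,j}$ are independent and mean zero, the classical symmetrization inequality bounds $\Ex\norm{(X_{i,j})_{i,j=1}^n}_{2\rightarrow 2}$ by twice $\Ex\norm{(\varepsilon_{i,j}X_{i,j})_{i,j=1}^n}_{2\rightarrow 2}$ for independent Rademacher signs $\varepsilon_{i,j}$ independent of everything else. Conditioning on $(X_{i,j})$ and invoking either Kahane's contraction principle or the elementary identity $\Ex|g_{i,j}|=\sqrt{2/\pi}$ together with Jensen, these Rademacher signs can be replaced, up to an absolute constant, by an independent standard Gaussian array $(g_{i,j})$, yielding
$$\Ex\norm{(X_{i,j})_{i,j=1}^n}_{2\rightarrow 2}\;\leq\; c\,\Ex_{X}\Ex_{g}\norm{(g_{i,j}X_{i,j})_{i,j=1}^n}_{2\rightarrow 2}.$$

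Next, for each fixed realization of $(X_{i,j})$ I apply the deterministic variant of Theorem \ref{thm:gauss_part1gesamt} with $a_{i,j}:=X_{i,j}$, so that
$$\Ex_{g}\norm{(g_{i,j}X_{i,j})_{i,j=1}^n}_{2\rightarrow 2}\leq c\left(\ln\left(e\frac{\norm{(X_{i,j})_{i,j=1}^n}_1}{\norm{(X_{i,j})_{i,j=1}^n}_{\infty}}\right)\right)^{3/2}\!\!\left(\max_{i}\norm{(X_{i,j})_{j=1}^n}_2+\max_{j}\norm{(X_{i,j})_{i=1}^n}_2\right).$$
The key observation is that for any $n\times n$ array one has the deterministic estimate $\norm{(X_{i,j})}_1\leq n^2\norm{(X_{i,j})}_\infty$, so the logarithmic factor is pointwise bounded by $c(\ln n)^{3/2}$, the case of a vanishing matrix being trivial. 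Pulling this constant through the outer expectation and integrating in $(X_{i,j})$ gives the asserted inequality.

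The main difficulty does not really lie in this reduction, which is essentially bookkeeping, but in Theorem \ref{thm:gauss_part1gesamt} itself, which encapsulates all the analytic content. The one subtle point in the present proof is the choice of which form of the Gaussian theorem to condition on: one must use the \emph{deterministic} variant, since the random log factor $\ln(e\norm{X}_1/\norm{X}_\infty)$ must be bounded by its worst case $\ln n$ before integration. Using the form with $\Ex\max_i\norm{(a_{i,j}g_{i,j})_{j}}_2+\Ex\max_j\norm{(a_{i,j}g_{i,j})_{i}}_2$ on the right would leave a random logarithm correlated with the coupling of the $X_{i,j}$ and $g_{i,j}$, which is not obviously estimable, and would also only yield a bound of order $\ln n$ on the right-hand maxima rather than the desired form with $(\ln n)^{3/2}$ outside.
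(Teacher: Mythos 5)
Your proposal is correct and is essentially the proof the paper intends: the paper obtains Theorem \ref{thm:allgemein_1} from the deterministic $(\ln)^{3/2}$ variant of Theorem \ref{thm:gauss_part1gesamt} by exactly the standard symmetrization and Gaussian-randomization argument ``as in Latala,'' with the random logarithmic factor bounded pointwise via $\norm{(X_{i,j})}_1\leq n^2\norm{(X_{i,j})}_\infty$, which is precisely your reduction. (The only trivial caveat, a defect of the statement rather than of your argument, is that for $n=1$ the factor should be read as $\ln(en)$ since $\ln(1)=0$.)
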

Since
$\Ex\left(\norm{(a_{i,j}g_{i,j})_{i,j=1}^n}_{2\rightarrow 2}\right)$
is up to a logarithmic factor equal to (\ref{TrivEst}) we investigate better estimate
from below.
On the other hand, 
\begin{equation}\label{lowerexpression}
\Ex\left(\max\limits_{i=1,...,n}\norm{(a_{i,j}g_{i,j})_{j=1}^n}_2\right)+\Ex\left(\max\limits_{j=1,...,n}\norm{(a_{i,j}g_{i,j})_{i=1}^n}_2\right)
\end{equation}
is obviously smaller than 
$\Ex\left(\norm{(a_{i,j}g_{i,j})_{i,j=1}^n}_{2\rightarrow 2}\right)$.
We show that the expression (\ref{lowerexpression}) is equivalent 
to the Musielak-Orlicz norm of the 
vector $(1,\dots,1)$, where the Orlicz functions are given through the
coefficients $a_{i,j}$, $i,j=1,\dots,n$. Our formula (Theorem \ref{thm:orlicz_darstellung}) enables us
to estimate from below the expectation of the operator norm in many cases efficiently.
\par
Moreover, we do not know of any matrix where the expectation of the norm
is not of the same order as (\ref{lowerexpression}).
\par
A convex function $M:[0,\infty)\rightarrow[0,\infty)$ with $M(0)=0$ is called an \textit{Orlicz function} \cite{RaRe}. Let $M$ be an Orlicz function and $x\in\R^n$ then the \textit{Orlicz norm} of $x$, $\norm{x}_M$, is defined by
	$$
\norm{x}_M
=\inf\left\{t>0\left|\sum\limits_{i=1}\limits^nM\left(\frac{|x_i|}{t}\right)\leq 1
\right.\right\}.
$$
	We say that two Orlicz functions $M$ and $N$ are equivalent 
	$(M\sim N)$ if there are
	strictly positive constants $c_{1}$ and $c_{2}$ such that for all $s\geq0$ 
	$$
	M(c_{1}s)\leq N(s)\leq M(c_{2}s).
	$$
	If two Orlicz functions are equivalent, so are their norms: For all 
	$x\in\mathbb R^{n}$
	$$
	c_{1}\|x\|_{M}\leq\|x\|_{N}\leq c_{2}\|x\|_{M}.
	$$
\par
	In addition, let $M_i$, $i=1,...,n$, be Orlicz functions and let $x\in\R^n$ then the \textit{Musielak-Orlicz norm} of $x$, $\norm{x}_{(M_i)_i}$, is defined by
	$$
	\norm{x}_{(M_i)_i}
	=\inf\left\{t>0\left|\sum\limits_{i=1}\limits^nM_i\left(\frac{|x_i|}{t}\right)\leq 1\right.\right\}.
	$$

\section{The upper estimate}\label{sec:part1}

In this section we are going to prove the upper estimate.
We require the following known lemma. In a more general form see e.g.
(\cite{Schue2}, Lemma 10).
\vskip 3mm

\begin{lem}\label{lem:netz} Let $x^{(l)}=\frac{1}{\sqrt{l}}(\overbrace{1,...,1}^{l},\overbrace{0,...,0}^{n-l}),$ $l=1,...,n$,
and let $B_T$ be the convex hull of 
$\left(\e_1x^{(l)}_{\pi(1)},...,\e_nx^{(l)}_{\pi(n)}\right),$
where $\e_i=\pm 1$, $i=1,...,n$, and $\pi$ denote permutations of $\{1,...,n\}$. 
Let $\|\ \|_{T}$ be the norm on $\mathbb R^{n}$ whose unit ball is
$B_{T}$.
Then,
for all $x\in\mathbb R^{n}$
$$
\|x\|_{2}\leq\|x\|_{T}\leq  \sqrt{\ln(en)}\|x\|_{2}.
$$
\end{lem}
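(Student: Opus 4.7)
The plan is to pass to the dual norm, since the definition of $\|\cdot\|_T$ via the convex hull makes it natural to compute the support function first and recover $\|\cdot\|_T$ by duality.

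First I would observe the lower inequality $\|x\|_2\le\|x\|_T$ directly: every generator $(\e_i x^{(l)}_{\pi(i)})_{i=1}^n$ has $\ell_2^n$-norm equal to $\|x^{(l)}\|_2=1$, hence $B_T\subset B_2^n$, and taking polars gives the claim. The work is in the upper bound.

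Next I would identify the dual norm. For $y\in\R^n$, maximizing $\langle y,z\rangle$ over the extreme points of $B_T$ gives
\[
\|y\|_{T^*}=\max_{1\le l\le n}\frac{1}{\sqrt{l}}\sum_{i=1}^{l}y^{*}_{i},
\]
where $y^{*}$ is the non-increasing rearrangement of $(|y_1|,\dots,|y_n|)$; the $\e_i$ align the signs and the permutation $\pi$ selects the $l$ largest $|y_i|$ to match the support of $x^{(l)}$. So it suffices to prove the dual estimate
\[
\|y\|_{T^*}\ge\frac{1}{\sqrt{\ln(en)}}\|y\|_2,
\]
and the upper bound in the lemma then follows from $\|x\|_T=\sup_{y\ne 0}\langle x,y\rangle/\|y\|_{T^*}$.

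The key step, and the only point with any substance, is the following rearrangement trick. Set $M:=\|y\|_{T^*}$, so that $\sum_{i=1}^{l} y^{*}_i\le M\sqrt{l}$ for every $l$. Since $y^{*}$ is non-increasing, $l\, y^{*}_{l}\le \sum_{i=1}^{l} y^{*}_{i}\le M\sqrt{l}$, hence $y^{*}_{l}\le M/\sqrt{l}$. Squaring and summing,
\[
\|y\|_2^2=\sum_{l=1}^{n}(y^{*}_{l})^2\le M^2\sum_{l=1}^{n}\frac{1}{l}\le M^2\ln(en),
\]
which is the dual inequality. The obstacle, if any, is recognizing that the correct quantity to control is $y^{*}_l$ (a pointwise consequence of the partial-sum hypothesis) rather than the partial sums themselves; everything else is bookkeeping about extreme points and Hardy-type rearrangement arguments.
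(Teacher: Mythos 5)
Your proof is correct, but it runs through the dual norm rather than the primal formula the paper relies on. The paper quotes (from Sch\"utt's earlier work) the explicit expression $\|x\|_{T}=\sum_{k=1}^{n}x_{k}^{*}(\sqrt{k}-\sqrt{k-1})$ for the gauge of $B_{T}$ and then finishes in one line by Cauchy--Schwarz together with $\sqrt{k}-\sqrt{k-1}\leq 1/\sqrt{k}$ and $\sum_{k\leq n}1/k\leq\ln(en)$; the lower bound $\|x\|_{2}\leq\|x\|_{T}$ is left implicit. You instead compute the support function of $B_{T}$ from its generating points, obtaining $\|y\|_{T^{*}}=\max_{l}\frac{1}{\sqrt{l}}\sum_{i\leq l}y_{i}^{*}$, deduce the pointwise bound $y_{l}^{*}\leq \|y\|_{T^{*}}/\sqrt{l}$ from monotonicity of the rearrangement, and sum squares to get $\|y\|_{2}\leq\sqrt{\ln(en)}\,\|y\|_{T^{*}}$, which dualizes to the upper estimate; the inclusion $B_{T}\subset B_{2}^{n}$ gives the lower one (no polarity is even needed there: a smaller unit ball has a larger gauge). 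Both arguments ultimately rest on the same harmonic-sum bound, but yours is self-contained --- it never needs the nontrivial identity for $\|\cdot\|_{T}$, only the elementary fact that a linear functional attains its maximum over a convex hull at one of the generating points --- at the cost of one extra duality step, while the paper's is shorter once the cited formula is granted.
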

\vskip 3mm

\begin{proof} Let $x\in\mathbb R^{n}$. Then $x^{*}_{1},\dots,x^{*}_{n}$
denotes the decreasing rearrangement of the numbers
$|x_{1}|,\dots,|x_{n}|$.
Let
 $a_{k}=\sqrt{k}-\sqrt{k-1}$ for $k=1,\dots,n$. Then, for
all $x\in\mathbb R^{n}$
$$
\|x\|_{T}=\sum_{k=1}^{n}x_{k}^{*}(\sqrt{k}-\sqrt{k-1}).
$$
Since $\sqrt{k}-\sqrt{k-1}\leq\frac{1}{\sqrt{k}}$
$$
\|x\|_{T}
\leq\left(\sum_{k=1}^{n}|\sqrt{k}-\sqrt{k-1}|^{2}\right)^{\frac{1}{2}}
\|x\|_{2}
\leq\left(\sum_{k=1}^{n}\frac{1}{k}\right)^{\frac{1}{2}}
\|x\|_{2}
\leq \sqrt{\ln(en)}\|x\|_{2}
$$
\end{proof}
\vskip 3mm

We denote  
$$
S_{T}^{n-1}=\left\{x=(x_1,...,x_n)\in S^{n-1}\left|\exists i=1,...,n \left|\left\{j=1,...,n|x_j=\pm\frac{1}{\sqrt{i}}\right\}\right|=i\right. \right\}.
$$
Then by our previous lemma we have
\begin{equation}\label{rem:netz}
\norm{A}_{2\rightarrow 2}=\sup\limits_{x\in S^{n-1}}\norm{Ax}_2\leq \sqrt{\ln(en)}\sup\limits_{x\in S_{T}^{n-1}}\norm{Ax}_2.
\end{equation}

We use now the concentration of sums of independent gaussian random variables
$X=\sum_{i=1}^{n}g_{i}z_{i}$ in a Banach space
( \cite{Pi}, Theorem 4.7):
 For all $t>0$
\begin{equation}\label{GaussConcPi}
\mathbb P\{|\|X\|-\mathbb E\|X\||\geq t\}
\leq 2\exp(-Kt^{2}/\sigma(X)^{2}),
\end{equation}
where $K=\frac{2}{\pi^{2}}$ and
\begin{equation}\label{GaussConcPi1}
\sigma(X)=\sup_{\|\xi\|=1}\left(\sum_{i=1}^{n}|\xi(z_{i})|^{2}\right)^{\frac{1}{2}}.
\end{equation}
The following lemma is an immediate consequence.
\vskip 3mm

\begin{lem}\label{lem:konzentration} For all $i,j=1,...,n$ let $a_{i,j}\in\R$, let $g_{i,j}$ be independent standard Gaussians, $G=(a_{i,j}g_{i,j})_{i,j=1}^n$ and let $x\in B_2^n$. For all $\beta\geq 1$ and all $x$ with
$\max\limits_{i=1,...,n}\left(\sum\limits_{j=1}\limits^na_{i,j}^2x_j^2\right)>0$ we have
\begin{eqnarray*}
	\left.\begin{split}
	& \Pv\left(\norm{Gx}_2>\beta
\left(\Ex\left(\max\limits_{i=1,...,n}\norm{(a_{i,j}g_{i,j})_{j=1}^n}_2\right)+\Ex\left(\max\limits_{j=1,...,n}\norm{(a_{ij}g_{ij})_{i=1}^n}_2\right)
\right)\right)\\
& \hskip -10mm\leq
2 \exp\left(-K\frac{\left(\beta
\left(\Ex\left(\max\limits_{i=1,...,n}\norm{(a_{i,j}g_{i,j})_{j=1}^n}_2\right)+\Ex\left(\max\limits_{j=1,...,n}\norm{(a_{i,j}g_{i,j})_{i=1}^n}_2\right)\right)-\left(\sum\limits_{i,j=1}\limits^na_{i,j}^2x_j^2\right)^{\frac{1}{2}}
\right)^2}{\max\limits_{i=1,...,n}\left(\sum\limits_{j=1}\limits^na_{i,j}^2x_j^2\right)}\right).
\end{split}\right.
\end{eqnarray*}
where $K$ is the constant from (\ref{GaussConcPi}).
\end{lem}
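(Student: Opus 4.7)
The plan is to apply the Gaussian concentration inequality (\ref{GaussConcPi}) directly to the random vector $X=Gx$, viewed as a Gaussian sum in the Banach space $(\R^n,\norm{\cdot}_2)$. Writing $e_i$ for the $i$-th standard basis vector, one has the representation
$$
Gx=\sum_{i,j=1}^n g_{i,j}\,z_{i,j},\qquad z_{i,j}:=a_{i,j}\,x_j\,e_i,
$$
so (\ref{GaussConcPi}) and (\ref{GaussConcPi1}) apply. The lemma amounts to repackaging this concentration in a form convenient for the proof of Theorem \ref{thm:gauss_part1gesamt}.

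The first substantive step is to identify the weak variance $\sigma(Gx)$. Since the dual unit ball of $\ell_2^n$ is $S^{n-1}$, a direct computation gives
$$
\sigma(Gx)^2=\sup_{\norm{y}_2=1}\sum_{i,j=1}^n y_i^2\,a_{i,j}^2\,x_j^2
=\sup_{\norm{y}_2=1}\sum_{i=1}^n y_i^2\Bigl(\sum_{j=1}^n a_{i,j}^2 x_j^2\Bigr)
=\max_{1\le i\le n}\sum_{j=1}^n a_{i,j}^2 x_j^2,
$$
which matches the denominator in the claimed exponent.

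Next I would bound $\Ex\norm{Gx}_2$ from above. Each coordinate $(Gx)_i=\sum_j a_{i,j}g_{i,j}x_j$ is a centered Gaussian of variance $\sum_j a_{i,j}^2 x_j^2$, so Jensen's inequality immediately yields
$$
\Ex\norm{Gx}_2\le\bigl(\Ex\norm{Gx}_2^2\bigr)^{1/2}=\Bigl(\sum_{i,j=1}^n a_{i,j}^2 x_j^2\Bigr)^{1/2}.
$$

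Finally I would plug in. Writing $T:=\beta\bigl(\Ex\max_i\norm{(a_{i,j}g_{i,j})_{j=1}^n}_2+\Ex\max_j\norm{(a_{i,j}g_{i,j})_{i=1}^n}_2\bigr)$ and $t:=T-\Ex\norm{Gx}_2$, the previous estimate gives $t\ge T-(\sum_{i,j}a_{i,j}^2 x_j^2)^{1/2}$. In the regime where this quantity is nonnegative (outside of which the asserted bound is essentially vacuous), squaring preserves the inequality and (\ref{GaussConcPi}) applied to $X=Gx$ with this value of $t$ gives
$$
\Pv(\norm{Gx}_2>T)\le\Pv\bigl(\norm{Gx}_2-\Ex\norm{Gx}_2\ge t\bigr)\le 2\exp\!\bigl(-K t^2/\sigma(Gx)^2\bigr),
$$
which, after inserting the two bounds above, is precisely the claim. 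There is no real obstacle here: the only content is the $\sigma(Gx)$ computation and the $L^2$-estimate for $\Ex\norm{Gx}_2$; everything else is direct substitution into (\ref{GaussConcPi}).
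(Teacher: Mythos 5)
Your argument is essentially the paper's own: the lemma is presented there as an immediate consequence of (\ref{GaussConcPi}), with exactly your two ingredients recorded right after its statement, namely $\sigma(Gx)=\max_{i=1,\dots,n}\bigl(\sum_{j=1}^n a_{i,j}^2x_j^2\bigr)^{1/2}$ and $\Ex\norm{Gx}_2\le\bigl(\sum_{i,j=1}^n a_{i,j}^2x_j^2\bigr)^{1/2}$, so the proposal is correct and follows the same route. Your caveat about the regime where $\beta(\cdots)-\bigl(\sum_{i,j}a_{i,j}^2x_j^2\bigr)^{1/2}\ge 0$ is also consistent with how the lemma is actually applied: in Proposition \ref{lem:obere_ab} this nonnegativity is verified explicitly via (\ref{lem:obere_ab11}) together with the assumption $\beta\ge\sqrt{\pi/2}$.
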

\vskip 3mm

Please note that
$$
\|Gx\|_{2\to2}\leq\left(\sum\limits_{i,j=1}\limits^na_{ij}^2x_j^2\right)^{\frac{1}{2}}
\hskip 15mm
\sigma(Gx)
=\max_{i=1\dots,n}\left(\sum\limits_{j=1}\limits^na_{ij}^2x_j^2\right)^{\frac{1}{2}}.
$$
\vskip 3mm

\begin{proposition}\label{lem:obere_ab} For all $i,j=1,...,n$ let $a_{i,j}\in\R$, 
let $g_{i,j}$ be independent standard Gaussian random variables and let 
$G=(a_{i,j}g_{i,j})_{i,j=1}^n$. For all $\beta$ with $\beta\geq\sqrt{\frac{\pi}{2}}$ \begin{eqnarray*}
\left.\begin{split}
& \Pv\left(\norm{G}_{2\rightarrow 2}>\beta \ln(n)
\left(\Ex\left(\max\limits_{i=1,...,n}\norm{(a_{i,j}g_{i,j})_{j=1}^n}_2
+\max\limits_{j=1,...,n}\norm{(a_{i,j}g_{i,j})_{i=1}^n}_2\right)\right)\right)\\
& \le2\sum\limits_{l=1}\limits^n
\exp\left(l\ln(2n)-Kl\frac{2\beta^{2}}{\pi }\right),
\end{split}\right.
\end{eqnarray*}
where $C$ is an absolute constant. Furthermore, we get for 
$\beta$ such that $K\frac{2\beta^{2}}{\pi }=3\ln(2n)$ and $K=\frac{2}{\pi^{2}}$
\begin{eqnarray*}
\left.\begin{split}
& \Pv\left(\norm{G}_{2\rightarrow 2}
>\sqrt{\frac{3\pi^{3}}{4}}\ln(en)
\left(\Ex\left(\max\limits_{i=1,...,n}\norm{(a_{i,j}g_{i,j})_{j=1}^n}_2+
\max\limits_{j=1,...,n}\norm{(a_{i,j}g_{i,j})_{i=1}^n}_2\right)\right)\right)\\
&
\leq\frac{1}{n^2}.
\end{split}\right.
\end{eqnarray*}
\end{proposition}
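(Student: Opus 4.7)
The strategy is to combine the coordinate net from \eqref{rem:netz} with the Gaussian concentration of Lemma~\ref{lem:konzentration} via a union bound over $S_{T}^{n-1}$. Let $E := \Ex(\max_{i}\|(a_{i,j}g_{i,j})_{j=1}^{n}\|_{2}) + \Ex(\max_{j}\|(a_{i,j}g_{i,j})_{i=1}^{n}\|_{2})$. By \eqref{rem:netz}, the event $\{\|G\|_{2\to 2}>\beta\ln(n)E\}$ forces $\|Gx\|_{2}>\beta\ln(n)E/\sqrt{\ln(en)}$ for some $x\in S_{T}^{n-1}$. I will stratify $S_{T}^{n-1}$ by the number $l$ of nonzero coordinates; the $l$-th stratum contains at most $\binom{n}{l}\,2^{l}\le (2n)^{l}$ vectors, which supplies precisely the factor $\exp(l\ln(2n))$ appearing in the claim.

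For $x$ in the $l$-th stratum, supported on $J$, a direct computation using $x_{j}^{2}=1/l$ on $J$ gives $(\sum_{i,j}a_{i,j}^{2}x_{j}^{2})^{1/2} = l^{-1/2}(\sum_{j\in J}\|(a_{i,j})_{i}\|_{2}^{2})^{1/2}\le \max_{j}\|(a_{i,j})_{i}\|_{2}$ and $\max_{i}\sum_{j}a_{i,j}^{2}x_{j}^{2}\le l^{-1}(\max_{i}\|(a_{i,j})_{j}\|_{2})^{2}$. The Gaussian first-moment inequality $\Ex\|(b_{k}g_{k})_{k}\|_{2}\ge\sqrt{2/\pi}\,\|(b_{k})_{k}\|_{2}$, applied to each row and column of $(a_{i,j})$, then yields $\max_{j}\|(a_{i,j})_{i}\|_{2}\le\sqrt{\pi/2}\,E$ and analogously for the other factor. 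Plugging into Lemma~\ref{lem:konzentration} with the per-vector threshold $\tilde\beta E:=\beta\ln(n)E/\sqrt{\ln(en)}$, each individual probability is bounded by $2\exp(-(2Kl/\pi)(\tilde\beta-\sqrt{\pi/2})^{2})$. The hypothesis $\beta\ge\sqrt{\pi/2}$, combined with the growth of $\ln(n)/\sqrt{\ln(en)}$, is calibrated so that $(\tilde\beta-\sqrt{\pi/2})^{2}\ge\beta^{2}$, and each such probability is therefore at most $2\exp(-Kl\cdot 2\beta^{2}/\pi)$. A union bound over the $(2n)^{l}$ vectors in each stratum, summed over $l=1,\dots,n$, yields the first inequality.

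For the second inequality I simply pick $\beta$ with $K\cdot 2\beta^{2}/\pi=3\ln(2n)$; each summand reduces to $\exp(-2l\ln(2n))=(2n)^{-2l}$, so the geometric series is bounded by $1/((2n)^{2}-1)\le 1/(2n^{2})$, and the factor $2$ in front gives the claimed $1/n^{2}$. The main obstacle is the constant tracking: verifying the Gaussian comparison $\Ex\|(b_{k}g_{k})_{k}\|_{2}\ge\sqrt{2/\pi}\,\|(b_{k})_{k}\|_{2}$ (which follows from $\Ex\sqrt{W}\ge\sqrt{2/\pi}\sqrt{\Ex W}$ for weighted chi-squares $W=\sum b_{k}^{2}g_{k}^{2}$) and then making the gap $(\tilde\beta-\sqrt{\pi/2})^{2}\ge\beta^{2}$ tight enough to cover all $\beta\ge\sqrt{\pi/2}$, so that the subtractive term $A_{x}$ in Lemma~\ref{lem:konzentration} is cleanly absorbed without damaging the exponent. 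Both are routine but require careful bookkeeping of the interplay between $\beta$, the net factor $\ln(n)/\sqrt{\ln(en)}$, and the numerical constant $\sqrt{\pi/2}$.
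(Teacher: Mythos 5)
Your overall architecture (net from \eqref{rem:netz}, stratification of $S_T^{n-1}$ by support size $l$, cardinality $(2n)^l$, Gaussian concentration from Lemma~\ref{lem:konzentration}, union bound, then the choice $K\tfrac{2\beta^2}{\pi}=3\ln(2n)$) is the paper's, but the step where you absorb the centering term is where the argument breaks. You bound the per-vector exponent by $-\tfrac{2Kl}{\pi}(\tilde\beta-\sqrt{\pi/2})^2$ with $\tilde\beta=\beta\ln(n)/\sqrt{\ln(en)}$ and then assert that $\beta\ge\sqrt{\pi/2}$ together with the growth of $\ln(n)/\sqrt{\ln(en)}$ gives $(\tilde\beta-\sqrt{\pi/2})^2\ge\beta^2$. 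That requires $\beta\bigl(\tfrac{\ln n}{\sqrt{\ln(en)}}-1\bigr)\ge\sqrt{\pi/2}$, which is false for $n\le 5$ for every $\beta$ (there $\ln(n)\le\sqrt{\ln(en)}$, so $\tilde\beta\le\beta$ and even the positivity needed to apply the concentration bound can fail), and false at the boundary $\beta=\sqrt{\pi/2}$ for all $n$ up to roughly $e^{2+2\sqrt2}\approx 125$. So the claimed first inequality is not established for the full stated range of $\beta$ and $n$; this is not bookkeeping but a wrong mechanism. The paper never subtracts $\sqrt{\pi/2}$ from the amplified threshold: it exploits that the threshold is $\beta$ times a \emph{sum of two} expectations. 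For $\beta\ge\sqrt{\pi/2}$, the column term alone satisfies $\beta\,\Ex\max_j\|(a_{i,j}g_{i,j})_{i}\|_2\ge\max_j\|(a_{i,j})_i\|_2\ge\tfrac1{\sqrt l}\bigl(\sum_i\sum_{j\in J}a_{i,j}^2\bigr)^{1/2}$ by \eqref{lem:obere_ab11} and the averaging bound, so it swallows the mean term completely, leaving the untouched row term $\beta\,\Ex\max_i\|(a_{i,j}g_{i,j})_j\|_2$ in the numerator; dividing by $\sigma_x\le l^{-1/2}\max_i(\sum_{j\in J}a_{i,j}^2)^{1/2}$ and using \eqref{lem:obere_ab11} again gives the clean exponent $-Kl\tfrac{2\beta^2}{\pi}$ with no loss and no reliance on the net factor being large.

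A second, related gap: you deduce the $1/n^2$ statement from your first display, but with the normalization $\beta\ln(n)$ and $\beta=\sqrt{3\pi^3\ln(2n)/4}$ the threshold becomes $\sqrt{3\pi^3/4}\,\sqrt{\ln(2n)}\,\ln(n)$, which for all but very small $n$ \emph{exceeds} $\sqrt{3\pi^3/4}\,\ln(en)$; hence the event in the second display is larger than the one you control, and the implication goes the wrong way. The paper's proof keeps the net cost at $\sqrt{\ln(en)}$ throughout (this is all Lemma~\ref{lem:netz} gives, and it is what the union bound is run with), and only at the end uses $\sqrt{\ln(2n)}\sqrt{\ln(en)}\le\ln(en)$ to reach the stated form. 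To repair your write-up you should (i) run the per-vector estimate at threshold $\beta E$ after paying only $\sqrt{\ln(en)}$ for the net, and (ii) replace the $(\tilde\beta-\sqrt{\pi/2})$ absorption by the paper's splitting of $\beta E$ into its row and column parts.
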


\begin{proof} We shall apply Lemma \ref{lem:konzentration}. We may assume that
$\max\limits_{i=1,...,n}\left(\sum\limits_{j=1}\limits^na_{i,j}^2x_j^2\right)>0$.
By (\ref{rem:netz})  
	$$
	\norm{G}_{2\rightarrow 2}
	\leq  \sqrt{\ln(en)}\sup\limits_{x\in S_{T}^{n-1}}\norm{Gx}_2.$$
Therefore, for $\beta\in\R_{>0}$, we have
\begin{eqnarray*}
	\left.\begin{split}
	& \Pv\left(\norm{G}_{2\rightarrow 2}>\beta  \sqrt{\ln(en)}
\left(\Ex\left(\max\limits_{i=1,...,n}\norm{(a_{i,j}g_{i,j})_{j=1}^n}_2+
\max\limits_{j=1,...,n}\norm{(a_{i,j}g_{i,j})_{i=1}^n}_2\right)\right)\right)\\
& \leq \Pv\left( \sup\limits_{x\in S_{T}^{n-1}}\norm{Gx}_2>\beta
\left(\Ex\left(\max\limits_{i=1,...,n}\norm{(a_{ij}g_{ij})_{j=1}^n}_2+
\max\limits_{j=1,...,n}\norm{(a_{i,j}g_{i,j})_{i=1}^n}_2\right)\right)
\right).
\end{split}\right.
\end{eqnarray*}
For all $l=1,...,n$ let $M_l$ be the set of $x^{(l)}\in S_T^{n-1}$, 
such that $x^{(l)}_j\in\{0,\pm\frac{1}{\sqrt{l}}\}$ for all $j=1,...,n$. Now we apply Lemma \ref{lem:konzentration} and get
\begin{eqnarray*}
\left.\begin{split}
& \Pv\left(\norm{G}_{2\rightarrow 2}>\beta   \sqrt{\ln(en)}
\left(\Ex\left(\max\limits_{i=1,...,n}\norm{(a_{i,j}g_{i,j})_{j=1}^n}_2+
\max\limits_{j=1,...,n}\norm{(a_{i,j}g_{i,j})_{i=1}^n}_2\right)\right)\right)\\
& \leq \Pv\left( \sup\limits_{x\in S_{T}^{n-1}}\norm{Gx}_2>\beta
\left(\Ex\left(\max\limits_{i=1,...,n}\norm{(a_{ij}g_{ij})_{j=1}^n}_2+
\max\limits_{j=1,...,n}\norm{(a_{i,j}g_{i,j})_{i=1}^n}_2\right)\right)
\right)\\
& \leq\sum\limits_{l=1}\limits^n\sum\limits_{x^{(l)}\in M_l}
\Pv\left(\norm{Gx^{(l)}}_2>\beta
\left(\Ex\left(\max\limits_{i=1,...,n}\norm{(a_{ij}g_{ij})_{j=1}^n}_2+
\max\limits_{j=1,...,n}\norm{(a_{i,j}g_{i,j})_{i=1}^n}_2\right)\right)
\right)\\
& \leq2\sum\limits_{l=1}\limits^n \sum\limits_{x^{(l)}\in M_l}\exp\left(-K\cdot l\left(\beta\frac{\left(\Ex\left(\max\limits_{i=1,...,n}\norm{(a_{ij}g_{ij})_{j=1}^n}_2+
\max\limits_{j=1,...,n}\norm{(a_{i,j}g_{i,j})_{i=1}^n}_2\right)\right)}{\max\limits_{i=1,...,n}\left(\sum\limits_{j\in\{k|x_k^{(l)}\neq 0\}}\limits^na_{ij}^2\right)^{\frac{1}{2}}}\right.\right.\\
& \quad\quad\quad\quad\quad\quad\quad\quad\quad\quad\quad\quad\quad\quad\quad\quad\quad\quad\quad\quad\left.\left.-\frac{\frac{1}{\sqrt{l}}\left(\sum\limits_{i=1}\limits^n\sum\limits_{j\in\{k|x_k^{(l)}\neq 0\}}a_{i,j}^2\right)^{\frac{1}{2}}}{\max\limits_{i=1,...,n}\left(\sum\limits_{j\in\{k|x_k^{(l)}\neq 0\}}\limits^na_{i,j}^2\right)^{\frac{1}{2}}}\right)^2\right).
\end{split}\right.
\end{eqnarray*}
By
$$\frac{1}{\sqrt{l}}\left(\sum\limits_{i=1}\limits^n\sum\limits_{j\in\{k|x_k^{(l)}\neq 0\}}a_{ij}^2\right)^{\frac{1}{2}}\leq\max\limits_{j\in\{k|x_k^{(l)}\neq 0\}}\norm{(a_{ij})_{i=1}^n}_2\leq\max\limits_{j=1,...,n}\norm{(a_{ij})_{i=1}^n}_2$$
and
\begin{equation}\label{lem:obere_ab11}
\Ex\max\limits_{j=1,...,n}\norm{(a_{i,j}g_{i,j})_{i=1}^n}_2
\geq \sqrt{\frac{2}{\pi}} \max\limits_{j=1,...,n}\norm{(a_{i,j})_{i=1}^n}_2
\end{equation} 
we have for all $\beta$ with $\beta\geq \sqrt{\frac{\pi}{2}}$
$$
\beta \Ex\max\limits_{j=1,...,n}\norm{(a_{ij}g_{ij})_i}_2-\frac{1}{\sqrt{l}}
\left(\sum\limits_{i=1}\limits^n\sum\limits_{j\in\{k|x_k^{(l)}\neq 0\}}a_{ij}^2
\right)^{\frac{1}{2}}\geq 0.
$$
Thus
\begin{eqnarray*}
\left.\begin{split}
& \Pv\left(\norm{G}_{2\rightarrow 2}>\beta   \sqrt{\ln(en)}
\left(\Ex\left(\max\limits_{i=1,...,n}\norm{(a_{i,j}g_{i,j})_{j=1}^n}_2+
\max\limits_{j=1,...,n}\norm{(a_{i,j}g_{i,j})_{i=1}^n}_2\right)\right)\right)\\
& \leq2\sum\limits_{l=1}\limits^n \sum\limits_{x^{(l)}\in M_l}
\exp\left(-K\cdot l\left(\beta\frac{\Ex\left(\max\limits_{i=1,...,n}\norm{(a_{i,j}g_{i,j})_{j=1}^n}_2\right)}{\max\limits_{i=1,...,n}\left(\sum\limits_{j\in\{k|x_k^{(l)}\neq 0\}}\limits^na_{i,j}^2\right)^{\frac{1}{2}}}
\right)^2\right).
\end{split}\right.
\end{eqnarray*}
Again, by (\ref{lem:obere_ab11}) we have
for all $\beta$ with $\beta\geq \sqrt{\frac{\pi}{2}}$
\begin{eqnarray*}
\left.\begin{split}
& \Pv\left(\norm{G}_{2\rightarrow 2}>\beta  \sqrt{\ln(en)}
\left(\Ex\left(\max\limits_{i=1,...,n}\norm{(a_{i,j}g_{i,j})_{j=1}^n}_2+
\max\limits_{j=1,...,n}\norm{(a_{i,j}g_{i,j})_{i=1}^n}_2\right)\right)\right)\\
& \leq 2\sum\limits_{l=1}\limits^n \sum\limits_{x^{(l)}\in M_l}
\exp\left(-Kl\frac{2\beta^{2}}{\pi }\right)
\leq2\sum\limits_{l=1}\limits^n2^ln^l
\exp\left(-Kl\frac{2\beta^{2}}{\pi }\right)  \\
&=2\sum\limits_{l=1}\limits^n
\exp\left(l\ln(2n)-Kl\frac{2\beta^{2}}{\pi }\right).
\end{split}\right.
\end{eqnarray*}
We choose $\beta$ such that $3\ln(2n)=K\frac{2\beta^{2}}{\pi }$. Then
\begin{eqnarray*}
\left.\begin{split}
& \Pv\left(\norm{G}_{2\rightarrow 2}>\sqrt{3\ln(2n)}\ln(n)
\left(\Ex\left(\max\limits_{i=1,...,n}\norm{(a_{ij}g_{ij})_{j=1}^n}_2+
\max\limits_{j=1,...,n}\norm{(a_{ij}g_{ij})_{i=1}^n}_2\right)\right)\right)\\
& \leq2 \sum\limits_{l=1}\limits^n\exp\left(l\ln(2n)-3l\ln(2n)\right)
=2\sum\limits_{l=1}\limits^n\exp\left(-2l\ln(2n)\right)
=2\sum\limits_{l=1}\limits^n\left(\frac{1}{4n^2}\right)^l\\
&=2\left(\frac{1-\left(\frac{1}{4n^2}\right)^{n+1}}{1-\frac{1}{4n^2}}-1\right)
=2\frac{1-\left(\frac{1}{4n^2}\right)^n}{4n^2-1}
\leq\frac{1}{n^2}.
\end{split}\right.
\end{eqnarray*}
\end{proof}

\vskip 3mm

\begin{proposition}\label{prop:mitn} Let $a_{i,j}\in\R$, $i,j=1,...,n$, and $g_{i,j}$, $i,j=1,...,n$, be independent standard Gaussian random variables, then 
\begin{eqnarray*}
	\left.\begin{split}
	&\Ex\left(\norm{(a_{i,j}g_{i,j})_{i,j=1}^n}_{2\rightarrow 2}\right)  \\
	&\leq\left(1+\sqrt{\frac{3\pi^{3}}{4}}\ln(en)\right)
\left(\Ex\left(\max\limits_{i=1,...,n}\norm{(a_{i,j}g_{i,j})_{j=1}^n}_2+
\max\limits_{j=1,...,n}\norm{(a_{i,j}g_{i,j})_{i=1}^n}_2\right)\right).
	\end{split}\right.
	\end{eqnarray*}
\end{proposition}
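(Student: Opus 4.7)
The plan is to convert the tail estimate of Proposition \ref{lem:obere_ab} into an expectation estimate via the layer-cake identity $\Ex\norm{G}_{2\to 2}=\int_{0}^{\infty}\Pv(\norm{G}_{2\to 2}>t)\,dt$. Set
\[
E:=\Ex\!\left(\max_{i=1,\dots,n}\norm{(a_{i,j}g_{i,j})_{j=1}^n}_2+\max_{j=1,\dots,n}\norm{(a_{i,j}g_{i,j})_{i=1}^n}_2\right)
\]
and $T:=\sqrt{3\pi^{3}/4}\,\ln(en)\,E$. Split the integral at $T$. The piece over $[0,T]$ is bounded by $T$ using only $\Pv\leq 1$, and this single step already produces the factor $\sqrt{3\pi^{3}/4}\,\ln(en)$ appearing in the statement. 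Everything then reduces to proving that $\int_{T}^{\infty}\Pv(\norm{G}_{2\to 2}>t)\,dt\leq E$, which will account for the remaining additive $1$ in the constant $1+\sqrt{3\pi^{3}/4}\,\ln(en)$.

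For this I need a tail bound valid not only at the single threshold $T$ but at every $t\geq T$. Retracing the argument used in Proposition \ref{lem:obere_ab} (but stopping short of substituting the specific value of $\beta$), what is in fact established along the way is
\[
\Pv(\norm{G}_{2\to 2}>\beta\sqrt{\ln(en)}\,E)\leq 2\sum_{l=1}^{n}\exp\!\bigl(l(\ln(2n)-2K\beta^{2}/\pi)\bigr)\qquad(\beta\geq\sqrt{\pi/2}).
\]
For $t=\beta\sqrt{\ln(en)}\,E\geq T$ one has $2K\beta^{2}/\pi\geq 3\ln(en)\geq 3\ln(2n)$, so the geometric series is dominated by twice its first term and yields a sub-Gaussian bound
\[
\Pv(\norm{G}_{2\to 2}>t)\leq 8n\exp\!\bigl(-2Kt^{2}/(\pi\,\ln(en)\,E^{2})\bigr).
\]

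Substituting $u=t\sqrt{2K/(\pi\ln(en))}/E$ turns the remaining integral into a one-dimensional Gaussian tail $\int_{\sqrt{3\ln(en)}}^{\infty}e^{-u^{2}}du$, which the elementary inequality $\int_{x}^{\infty}e^{-u^{2}}du\leq e^{-x^{2}}/(2x)$ controls by a quantity of order $(en)^{-3}$. Tracking the prefactor $8n\cdot E\sqrt{\pi\ln(en)/(2K)}$ coming from the substitution shows that $\int_{T}^{\infty}\Pv(\norm{G}_{2\to 2}>t)\,dt$ is of order $E/n^{2}$, and in particular at most $E$, which closes the estimate. The only real obstacle is bookkeeping of constants so that the tail integral really fits inside a single additive $E$ rather than some multiple of it; the polynomial decay $(en)^{-3}$ leaves ample slack against the prefactor of order $n$, so this step is routine.
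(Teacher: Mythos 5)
Your argument is correct, and it differs from the paper's proof in the step that matters. Both proofs share the same skeleton: the trivial bound up to the threshold $T=\sqrt{3\pi^{3}/4}\,\ln(en)\,E$ accounts for the logarithmic factor, and the whole content is in showing the excess above $T$ contributes at most $E$. The paper handles the excess without any further use of the net: it writes $\Ex(\norm{G}_{2\to2}\chi_{M^{c}})\leq\sqrt{\Pv(M^{c})}\,(\Ex\norm{G}_{2\to2}^{2})^{1/2}$, uses only the single-threshold probability $\Pv(M^{c})\leq n^{-2}$ from Proposition \ref{lem:obere_ab}, and controls the second moment by the crude deterministic bound $\norm{G}_{2\to2}^{2}\leq\sum_{i,j}|a_{i,j}g_{i,j}|^{2}$, so that $\frac{1}{n}\bigl(\sum_{i,j}a_{i,j}^{2}\bigr)^{1/2}\leq\max_{i}\norm{(a_{i,j})_{j=1}^{n}}_{2}\leq E$. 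You instead integrate the tail: you need the bound of Proposition \ref{lem:obere_ab} for every $\beta\geq\sqrt{\pi/2}$ (which is exactly what its proof establishes before the specific $\beta$ is substituted, with $\sqrt{\ln(en)}$ rather than the $\ln(n)$ appearing in its statement — you correctly cite the form actually proved), dominate the geometric series by twice its first term, and reduce to a Gaussian tail integral starting at $\sqrt{3\ln(en)}$; your bookkeeping checks out (the lower limit is indeed $\sqrt{3\ln(en)}$ since $\tfrac{2K}{\pi}\cdot\tfrac{3\pi^{3}}{4}=3$, and the tail integral comes out of order $E/n^{2}$), and the caveat $\beta\geq\sqrt{\pi/2}$ is satisfied on the range $t\geq T$. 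What each approach buys: the paper's Cauchy--Schwarz route needs only one probability estimate plus an elementary second-moment bound, while yours avoids any moment bound on $\norm{G}_{2\to2}$ at the price of tracking the tail at all levels; yours in fact leaves considerable slack (the excess is $O(E/n^{2})$ rather than merely $\leq E$), so both recover the stated constant $1+\sqrt{3\pi^{3}/4}\,\ln(en)$.
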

\vskip 3mm

\begin{proof}  We divide the estimate of $\Ex\left(\norm{(a_{i,j}g_{i,j})_{i,j=1}^n}_{2\rightarrow 2}\right)$ into two parts.  Let $M$ be set of all points with
	\begin{eqnarray*}
	&&\norm{(a_{i,j}g_{i,j})_{i,j=1}^n}_{2\rightarrow 2}  \\
	&&\leq\sqrt{\frac{3\pi^{3}}{4}}\ln(en)
	\Ex\left(\max\limits_{i=1,...,n}\norm{(a_{i,j}g_{i,j})_{j=1}^n}_2+
	\max\limits_{j=1,...,n}\norm{(a_{i,j}g_{i,j})_{i=1}^n}_2\right).
	\end{eqnarray*}
Clearly,
\begin{eqnarray*}
	\left.\begin{split}
	&\Ex\left(\norm{(a_{i,j}g_{i,j})_{i,j=1}^n}_{2\rightarrow 2}\chi_M\right) \\
	&\leq\sqrt{\frac{3\pi^{3}}{4}}\ln(en)
	\Ex\left(\max\limits_{i=1,...,n}\norm{(a_{i,j}g_{i,j})_{j=1}^n}_2
	+\max\limits_{j=1,...,n}\norm{(a_{i,j}g_{i,j})_{i=1}^n}_2\right).
	\end{split}\right.
\end{eqnarray*}
Furthermore, by Cauchy-Schwarz inequality and Proposition \ref{lem:obere_ab} we get
\begin{eqnarray*}
	\left.\begin{split}
     & \Ex\left(\norm{(a_{i,j}g_{i,j})_{i,j=1}^n}_{2\rightarrow 2}\chi_{M^{c}}\right)
	\leq \sqrt{\Pv\left(M^{c}\right)}\left(\Ex\left(\norm{(a_{i,j}g_{i,j})_{i,j=1}^n}^2_{2\rightarrow 2}\right)\right)^{\frac{1}{2}}
	\\
	& \leq \frac{1}{n}\left(\Ex\left(\norm{(a_{i,j}g_{i,j})_{i,j=1}^n}^2_{2\rightarrow 2}\right)\right)^{\frac{1}{2}}
	\leq \frac{1}{n}\left(\int\sum\limits_{i=1}\limits^n\left(\sum\limits_{j=1}\limits^n|a_{i,j}g_{i,j}|\right)^2d\Pv\right)^{\frac{1}{2}}  \\
	&=\frac{1}{n}\left(\sum_{i,j=1}^{n}|a_{i,j}|^{2}\right)^{\frac{1}{2}}
	\leq\max_{1\leq i\leq n}\left(\sum_{j=1}^{n}|a_{i,j}|^{2}\right)^{\frac{1}{2}}.
	\end{split}\right.  
\end{eqnarray*}
Besides, we obviously have
\begin{eqnarray*}
\begin{split}
&\max\limits_{i=1,...,n}\norm{(a_{ij})_{j=1}^n}_2+\max\limits_{j=1,...,n}\norm{(a_{ij})_{i=1}^n}_2  \\
&\leq \Ex\left(\max\limits_{i=1,...,n}\norm{(a_{ij}g_{ij})_{j=1}^n}_2+\max\limits_{j=1,...,n}\norm{(a_{ij}g_{ij})_{i=1}^n}_2\right).
\end{split}
\end{eqnarray*}
Altogether, this yields
	\begin{eqnarray*}
		\left.\begin{split}
	     & \Ex\left(\norm{(a_{i,j}g_{i,j})_{i,j=1}^n}_{2\rightarrow 2}\chi_{M^{c}}\right)
	     \leq \Ex\left(\max\limits_{i=1,...,n}\norm{(a_{i,j}g_{i,j})_{j=1}^n}_2+\max\limits_{j=1,...,n}\norm{(a_{i,j}g_{i,j})_{i=1}^n}_2\right).
		\end{split}\right.
	\end{eqnarray*}
Summing up, we get
\begin{eqnarray*}
\left.\begin{split}
& \Ex\left(\norm{(a_{i,j}g_{i,j})_{i,j=1}^n}_{2\rightarrow 2}\right)\\
&\leq
\left(1+\sqrt{\frac{3\pi^{3}}{4}}\ln(en)\right)
\Ex\left(\max\limits_{i=1,...,n}\norm{(a_{i,j}g_{i,j})_{j=1}^n}_2+
\max\limits_{j=1,...,n}\norm{(a_{i,j}g_{i,j})_{i=1}^n}_2\right).
\end{split}\right.
\end{eqnarray*}
\end{proof}
\vskip 3mm

\begin{proof} (Theorem \ref{thm:gauss_part1gesamt}) W.l.o.g. we assume $a_{i,j}\leq 1$, $i,j=1,...,n$, and that there is a coordinate
that equals $1$. For all $i,j=1,...,n$ and $k\in\N$ we define
	\begin{equation*}
	  a_{i,j}^k = 
	   \left\{ 
	    \begin{array}{ll}
	                 \frac{1}{2^k} &  \hskip 5mm
	                 \hbox{if}\quad \frac{1}{2^k}<a_{i,j}\leq \frac{1}{2^{k-1}} \\
	                 0 & \hskip 5mm\hbox{else}.
	    \end{array} 
	   \right.
	 \end{equation*}
Let $G=(a_{i,j}g_{i,j})_{i,j=1}^n$ and $G^k=(a_{i,j}^{k}g_{i,j})_{i,j=1}^n$.
We denote by $\phi(k)$ the number of nonzero entries of the matrix
 $(a_{i,j}^k)_{i,j=1}^n$ and we choose $\gamma$ such that $\norm{(a_{i,j})_{i,j=1}^n}_1=2^{\gamma}\norm{(a_{i,j})_{i,j=1}^n}_{\infty}$. Thus, we get $\phi(k)\frac{1}{2^k}=\sum\limits_{i,j=1}^na_{i,j}^k\leq 2^{\gamma}$ and therefore $\phi(k)\leq 2^{k+\gamma}$.
Therefore, the non-zero entries of $G^{k}$ are contained in a submatrix of size
$2^{k+\gamma}\times 2^{k+\gamma}$.
Taking this into account and applying Proposition \ref{prop:mitn} to $G^{k}$ 
\begin{eqnarray*}
\left.\begin{split}
&\Ex\norm{G^k}_{2\rightarrow 2}  \\
&\leq\left(1+\sqrt{\frac{3\pi^{3}}{4}}\ln(e2^{k+\gamma})\right)
\mathbb E\left(\max\limits_{i=1,...,n}\norm{(a^k_{i,j}g_{i,j})_{j=1}^n}_2			+\max\limits_{j=1,...,n}\norm{(a^k_{i,j}g_{i,j})_{i=1}^n}_2\right)  \\
&\leq 140(k+\gamma)
\left(\mathbb E\sum_{i,j=1}^{n}|a_{i,j}^{k}g_{i,j}|^{2}\right)^{\frac{1}{2}}
\leq 140(k+\gamma)2^{\frac{\gamma}{2}-\frac{k}{2}}
\end{split}\right.
\end{eqnarray*}
Therefore,
$$
\sum\limits_{k\geq2\gamma}\limits^{} \Ex\norm{G^k}_{2\rightarrow 2}
\leq140\sum_{k\geq2\gamma}
\frac{k+\gamma}{2^{\frac{k}{4}}}
\leq280\sum_{k=1}^{\infty}
\frac{k}{2^{\frac{k}{4}}}.
$$
Since one of the coordinates of the matrix is $1$
$$
 \Ex\norm{G^1}_{2\rightarrow 2}
 \geq \int_{-\infty}^{\infty}|g|dt
 =\sqrt{\frac{2}{\pi}}.
$$
Therefore, there is a constant $c$ such that
\begin{eqnarray*}
\Ex\norm{G}_{2\rightarrow 2}
\leq2\mathbb E\left\|\sum_{k\leq 2\gamma}G^{k}\right\|_{2\rightarrow 2}
+2\sum_{k>2\gamma}\mathbb \|G^{k}\|_{2\rightarrow 2}
\leq
c\mathbb E\left\|\sum_{k\leq 2\gamma}G^{k}\right\|_{2\rightarrow 2}.
\end{eqnarray*}
The matrix 
$\sum_{k\leq 2\gamma}G^{k}$ has at most
\begin{equation}\label{mainthn1}
\sum_{k\leq 2\gamma}\phi(k)
\leq\sum_{k\leq 2\gamma}2^{\gamma+k}
\leq 2^{3\gamma+1}
\leq\left(\frac{\left\|(a_{i,j})_{i,j=1}^{n}\right\|_{1}}
{\left\|(a_{i,j})_{i,j=1}^{n}\right\|_{\infty}}\right)^{4}
\end{equation}
entries that are different from $0$. Therefore, all nonzero entries
of $\sum_{k\leq 2\gamma}G^{k}$ are contained in a square 
submatrix having less than (\ref{mainthn1}) rows and columns.
We may apply Proposition \ref{prop:mitn} and get with a proper constant
$c$
\begin{eqnarray*}
	\left.\begin{split}
	\Ex\left(\norm{G}_{2\rightarrow 2}\right)  
	\leq&
	c\left(1+\sqrt{\frac{3\pi^{3}}{4}}\ln\left(e\left(\frac{\left\|(a_{i,j})_{i,j=1}^{n}\right\|_{1}}
{\left\|(a_{i,j})_{i,j=1}^{n}\right\|_{\infty}}\right)^{4}\right)\right) \times \\
&\mathbb E
\left(\max\limits_{i=1,...,n}\norm{\left(\sum_{k\leq2\gamma}a_{i,j}^{k}g_{i,j}\right)_{j=1}^n}_2+\max\limits_{j=1,...,n}\norm{\left(\sum_{k\leq2\gamma}a_{i,j}^{k}g_{i,j}\right)_{i=1}^n}_2\right) .
	\end{split}\right.
	\end{eqnarray*}
\end{proof}

\section{The lower estimate}\label{sec:part2}

\begin{satz}\label{thm:orlicz_darstellung} For all $i,j=1,...,n$ let $a_{i,j}\in\R$ and $g_{i,j}$ be independent standard Gaussians. For all $s\in\R_{\geq 0}$ and for all $i=1,...,n$ let
$$
N_i(s)=
\begin{cases}
	s\max\limits_{j=1,\dots,n}|a_{i,j}|e^{-\frac{1}{s^2\max\limits_{j=1,\dots,n}a_{i,j}^2}} &,s<\frac{1}{\norm{(a_{i,j})_{j=1}^n}_2}\\
	\frac{\max\limits_{j=1,\dots,n}|a_{i,j}|}{\norm{(a_{i,j})_{j=1}^n}_2}e^{-\frac{\norm{(a_{i,j})_{j=1}^n}_2^2}{\max\limits_{j=1,\dots,n}a_{i,j}^2}}+\frac{3}{e}\norm{(a_{i,j})_{j=1}^n}_2\left(s-\frac{1}{\norm{(a_{i,j})_{j=1}^n}_2}\right) &,s\geq\frac{1}{\norm{(a_{i,j})_{j=1}^n}_2},
\end{cases}
$$
respectively let for all $s\in\R_{\geq 0}$ and for all $j=1,...,n$
$$\widetilde{N_j}(s)=
\begin{cases}
	s\max\limits_{i=1,\dots,n}|a_{i,j}|e^{-\frac{1}{s^2\max\limits_{i=1,..\dots,n}a_{i,j}^2}} &,s<\frac{1}{\norm{(a_{ij})_{i=1}^n}_2}\\
	\frac{\max\limits_{i=1,..\dots,n}|a_{ij}|}{\norm{(a_{i,j})_{i=1}^n}_2}e^{-\frac{\norm{(a_{i,j})_{i=1}^n}_2^2}{\max\limits_{i=1,\dots,n}a_{i,j}^2}}+\frac{3}{e}\norm{(a_{i,j})_{i=1}^n}_2\left(s-\frac{1}{\norm{(a_{i,j})_{i=1}^n}_2}\right) &,s\geq\frac{1}{\norm{(a_{i,j})_{i=1}^n}_2}.
\end{cases}
$$
Then 
\begin{eqnarray*}
&&c_{1}\left(\norm{(1)_{j=1}^n}_{(N_i)_i}+\norm{(1)_{i=1}^n}_{(\widetilde{N_j})_j} \right)
\\
&&\leq
\Ex\left(\max\limits_{i=1,...,n}\norm{(a_{i,j}g_{i,j})_{j=1}^n}_2+
\max\limits_{j=1,...,n}\norm{(a_{i,j}g_{i,j})_{i=1}^n}_2\right) \\
&&\leq
c_{2}\left(\norm{(1)_{j=1}^n}_{(N_i)_i}+\norm{(1)_{i=1}^n}_{(\widetilde{N_j})_j}
\right),
\end{eqnarray*}
where $c_1$ and $c_2$ are absolute constants.
\end{satz}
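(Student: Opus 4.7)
The plan is to prove the one-sided equivalence
\[
\Ex\max_{i=1,\dots,n}Y_i\;\asymp\;\norm{(1)_{j=1}^n}_{(N_i)_i},
\qquad Y_i:=\Bigl(\sum_{j=1}^na_{i,j}^2g_{i,j}^2\Bigr)^{\!1/2},
\]
from which the full statement follows by transposing the matrix and adding the analogous estimate for the columns with $\widetilde{N_j}$. Since the rows of $G=(a_{i,j}g_{i,j})_{i,j=1}^n$ are independent, the $Y_i$ are independent non-negative random variables; using $\Ex Z=\int_0^\infty\Pv(Z>t)\,dt$ together with the standard equivalence $1-\prod_i(1-p_i)\asymp\min\bigl(1,\sum_ip_i\bigr)$ for $p_i\in[0,1]$ reduces matters to showing
\[
\int_0^\infty\min\Bigl(1,\sum_{i=1}^n\Pv(Y_i>t)\Bigr)\,dt
\;\asymp\;\norm{(1)_{j=1}^n}_{(N_i)_i}.
\]

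The second step is a sharp two-sided estimate of $\Pv(Y_i>t)$. Write $A_i=\max_j|a_{i,j}|$ and $B_i=\norm{(a_{i,j})_{j=1}^n}_2$, so $(g_{i,j})_j\mapsto Y_i$ is $A_i$-Lipschitz on $\R^n$ and $\sqrt{2/\pi}\,B_i\leq\Ex Y_i\leq B_i$. Gaussian concentration combined with a Mills-ratio refinement yields, for $t\geq B_i$,
\[
\Pv(Y_i>t)\;\asymp\;\frac{A_i}{t}\exp\!\bigl(-\Theta(t^2/A_i^2)\bigr),
\]
the upper bound from integrating the Lipschitz tail and the lower bound from $Y_i\geq A_i|g_{i,j^\ast}|$ with $j^\ast$ attaining the maximum; for $t\leq B_i/2$ one has $\Pv(Y_i>t)\geq 1/2$. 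Substituting $s=1/t$, this is precisely the exponential piece $N_i(s)=sA_ie^{-1/(s^2A_i^2)}$ on the range $s<1/B_i$, while the piecewise linear extension for $s\geq 1/B_i$ is the unique convex continuation whose slope $3B_i/e$ is forced by the value and derivative of the exponential piece at $s=1/B_i$. Up to an absolute rescaling of $t$ one therefore has $\Pv(Y_i>t)\asymp N_i(1/t)\wedge 1$.

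Let $T^\ast=\norm{(1)_{j=1}^n}_{(N_i)_i}=\inf\{t>0:\sum_iN_i(1/t)\leq 1\}$ and split the integral at $T^\ast$. For $t\leq T^\ast$ the integrand is bounded below by a positive constant since $\sum_i\Pv(Y_i>t)\gtrsim\sum_i(N_i(1/t)\wedge 1)\gtrsim 1$ just below level $T^\ast$; this contributes $\asymp T^\ast$. For $t\geq T^\ast$, integrating the Gaussian tail gives
\[
\int_{T^\ast}^{\infty}\sum_{i=1}^{n}\Pv(Y_i>t)\,dt
\;\lesssim\;\sum_i\frac{A_i^2}{T^\ast}\exp\bigl(-\Theta((T^\ast)^2/A_i^2)\bigr)
\;\lesssim\;T^\ast\sum_iN_i(1/T^\ast)
\;\lesssim\;T^\ast,
\]
the last inequality by the definition of $T^\ast$. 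Adding the two ranges yields the claimed $\Ex\max_iY_i\asymp T^\ast$.

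The delicate point — which I expect to be the main obstacle — is the behaviour of $\Pv(Y_i>t)$ at the crossover $t\asymp B_i$, where $Y_i$ transitions from being bulk-concentrated around $B_i$ to exhibiting a Gaussian tail. Splicing matching upper and lower tail estimates cleanly across this transition, and in particular producing the lower bound $\Pv(Y_i>t)\gtrsim N_i(1/t)\wedge 1$ throughout the entire range (not only deep in the Gaussian tail), will require separating the case $B_i\asymp A_i$, where one coordinate dominates and the result reduces to the one-dimensional Mills estimate, from the case $B_i\gg A_i$, where $Y_i^2=\sum_ja_{i,j}^2g_{i,j}^2$ is a well-concentrated weighted $\chi^2$ and anti-concentration must be extracted from its explicit density. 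The particular constants (the factor $1$ in the exponent and $3/e$ in the slope) are exactly what make $N_i$ convex with the correct levels, and they are to be read off from this analysis.
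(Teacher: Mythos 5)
Your route differs from the paper's: the paper invokes Lemma \ref{thm:wichtig_orlicz} (the Musielak--Orlicz representation of $\Ex\max_i|x_iX_i|$ with the associated functions $M_i$) and then only has to verify the equivalence $M_i\sim N_i$ of Orlicz functions, whereas you re-derive everything from $\Ex\max_iY_i\asymp\int_0^\infty\min(1,\sum_i\Pv(Y_i>t))\,dt$. That reduction and the transposition step are fine, but your upper estimate has a genuine quantitative gap. The function $N_i(1/t)=\frac{A_i}{t}e^{-t^2/A_i^2}$ carries the exponent $t^2/A_i^2$, while the true tail of $Y_i$ is only of size $e^{-\Theta(t^2/A_i^2)}$ with an exponent constant $\le\frac12$; pointwise, $\Pv(Y_i>t)$ can exceed $N_i(1/t)$ by a factor as large as $e^{t^2/(2A_i^2)}$. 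Hence the chain $\int_{T^*}^\infty\sum_i\Pv(Y_i>t)\,dt\lesssim T^*\sum_iN_i(1/T^*)\lesssim T^*$ is false as written: for $a_{i,j}=\delta_{i,j}$ one has $A_i=B_i=1$ and $T^*\asymp\sqrt{\ln n}$ with $\sum_iN_i(1/T^*)=1$, yet $\sum_i\Pv(|g_i|>T^*)\asymp\frac{n}{T^*}e^{-T^{*2}/2}$ is a positive power of $n$, and the integral over $[T^*,\infty)$ is much larger than $T^*$. Moreover your claimed two-sided bound $\Pv(Y_i>t)\asymp\frac{A_i}{t}e^{-\Theta(t^2/A_i^2)}$ for all $t\ge B_i$ fails in the upper direction at the crossover: when all $|a_{i,j}|$ are equal, $\Pv(Y_i>B_i)$ is of constant order while your right-hand side is exponentially small in $n$; Gaussian concentration only gives decay in $(t-\Ex Y_i)/A_i$, i.e.\ it is useful for $t\ge2\Ex Y_i$, as in the paper's restriction $u>2\Ex(\sum_ja_{i,j}^2g_{i,j}^2)^{1/2}$ in (\ref{eqn:w_keit}).

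Both defects are repaired by carrying the ``absolute rescaling of $t$'' (which you mention once and then drop) through the integral estimate: the linear piece of $N_i$ forces $B_i\le(1+\frac e3)T^*$ for every $i$, so for an absolute constant $c$ and $t\ge cT^*$ one is in the genuine concentration regime, where $\Pv(Y_i>t)\le2e^{-K(t/2)^2/A_i^2}$ is dominated by $N_i$ evaluated at $1/(c't)$ for a suitable absolute $c'$, and splitting at $cT^*$ rather than $T^*$ yields $\Ex\max_iY_i\lesssim T^*$. This bookkeeping with constants inside the argument is precisely what the paper's formulation via equivalent Orlicz functions, $M_i(c_1s)\le N_i(s)\le M_i(c_2s)$, absorbs automatically. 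Finally, you locate the difficulty in the wrong place: the lower bound $\Pv(Y_i>t)\gtrsim N_i(1/t)\wedge1$ requires no $\chi^2$ anti-concentration --- Paley--Zygmund for $t\le B_i/2$ and the one-coordinate bound $Y_i\ge A_i|g_{i,j^*}|$ with the Mills ratio (\ref{eqn:integral}) for larger $t$ already dominate $N_i(1/t)$, exactly because the exponent constant $1$ in $N_i$ is lossy compared with the true $\tfrac12$; this mirrors the paper's one-line lower bound $\sum_ja_{i,j}^2g_{i,j}^2\ge a_{i,1}^2g_{i,1}^2$.
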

\vskip 3mm

The following example is an immediate consequence of Theorem
\ref{thm:orlicz_darstellung}. It covers Toeplitz matrices.

{\footnotesize
\vskip 1mm
\begin{bsp}
Let $A$ be a $n\times n$-matrix such that for all $i,=1\dots,n$ and $k=1,\dots,n$
$$
\left(\sum_{j=1}^{n}|a_{i,j}|^{2}\right)^{\frac{1}{2}}
=\left(\sum_{j=1}^{n}|a_{j,k}|^{2}\right)^{\frac{1}{2}}
$$
and
$$
\max_{1\leq j\leq n}|a_{i,j}|
=\max_{1\leq j\leq n}|a_{j,k}|
$$
Then
\begin{eqnarray*}
&&\Ex\left(\max\limits_{i=1,..\dots,n}\norm{(a_{i,j}g_{i,j})_{j=1}^n}_2+
\max\limits_{j=1,..\dots,n}\norm{(a_{i,j}g_{i,j})_{i=1}^n}_2\right)  \\
&&\sim\max\left\{\left(\sum_{j=1}^{n}|a_{1,j}|^{2}\right)^{\frac{1}{2}},
\sqrt{\ln n}\max_{1\leq j\leq n}|a_{1,j}|\right\}
\end{eqnarray*}
\end{bsp}
}
\noindent
We associate to a random variable $X$ an Orlicz function $M$ by
\begin{equation}\label{OrlRandom}
M(s)=\int_0^s\int\limits_{\frac{1}{t}\leq |X|}|X|d\Pv dt.
\end{equation}
We have
\begin{eqnarray}\label{OrlRandom1}
		\left.\begin{split}
		M(s)
		&=\int\limits_0\limits^s\int\limits_{\frac{1}{t}\leq |X|}|X|d\Pv dt  \\
		&=\int\limits_0\limits^s\left(\frac{1}{t}\Pv\left(|X|\geq\frac{1}{t}\right)+\int\limits_{\frac{1}{t}}\limits^{\infty}\Pv\left(|X|\geq u\right)du\right)dt.
		\end{split}\right.
		\end{eqnarray}
\vskip 3mm

\begin{lem}\label{thm:wichtig_orlicz} There are strictly positive constants
$c_{1}$ and $c_{2}$ such that for all $n\in\mathbb N$, all
independent random variables $X_1,...,X_n$ with finite first moments
and 
for all $x\in\R^n$ 
$$
c_{1}\norm{x}_{(M_i)_i}
\leq \Ex\max\limits_{1\leq i\leq n}|x_iX_i|
\leq c_2\norm{x}_{(M_i)_i},
$$
where $M_1,...,M_n$ are the
 Orlicz functions that are associated to the random variables 
 $X_{1},\dots,X_{n}$ (\ref{OrlRandom}).
\end{lem}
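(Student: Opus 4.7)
The first step is to simplify the Orlicz function $M_i$. Applying Fubini's theorem to the defining integral $M_i(s) = \int_0^s \Ex[|X_i|\mathbf{1}_{\{|X_i|\geq 1/t\}}]\,dt$ and switching the order of integration gives the closed form $M_i(s) = \Ex(s|X_i|-1)_+$. Under this reformulation the condition $\sum_i M_i(|x_i|/t)\leq 1$ from the definition of the Musielak-Orlicz norm becomes $\sum_i \Ex(|x_iX_i|-t)_+\leq t$. Writing $Y_i = |x_iX_i|$ and $t_0 = \norm{x}_{(M_i)_i}$, the continuity of each $M_i$ (granted by the finite first moments) yields the identity $\sum_i \Ex(Y_i - t_0)_+ = t_0$.

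For the upper bound I would use the pointwise inequality $\max_i Y_i \leq t_0 + \sum_i (Y_i - t_0)_+$, which holds by separating the cases $\max_i Y_i \leq t_0$ and $\max_i Y_i > t_0$. Taking expectations and invoking the preceding identity immediately gives $\Ex\max_i Y_i \leq 2 t_0$, so one may take $c_2 = 2$.

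For the lower bound, set $\phi(t_0) = \sum_i \Pv(Y_i > t_0)$ and split into two regimes. If $\phi(t_0) \geq 1/2$, then independence together with $1 - \prod_i(1-p_i) \geq 1 - e^{-\sum p_i}$ yields $\Pv(\max_i Y_i > t_0) \geq 1 - e^{-1/2}$, whence the Markov-type estimate $\Ex\max_i Y_i \geq t_0\,\Pv(\max_i Y_i > t_0) \gtrsim t_0$ suffices. If $\phi(t_0) < 1/2$, then each $\Pv(Y_j > t_0) < 1/2$, so $\prod_j(1 - \Pv(Y_j > t_0)) \geq e^{-2\phi(t_0)} \geq e^{-1}$. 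Isolating the disjoint events $A_i = \{Y_i > t_0\}\cap\bigcap_{j\neq i}\{Y_j\leq t_0\}$, on each of which $\max_k Y_k = Y_i$, we get
$$\Ex\max_i Y_i \geq \sum_i \Ex[Y_i\mathbf{1}_{A_i}] \geq \sum_i \Ex[(Y_i - t_0)_+]\prod_{j\neq i}\Pv(Y_j\leq t_0) \gtrsim \sum_i \Ex(Y_i - t_0)_+ = t_0.$$

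The main obstacle I anticipate is precisely this dichotomy: when some individual $\Pv(Y_j > t_0)$ is close to one, the product $\prod_{j\neq i}\Pv(Y_j\leq t_0)$ can collapse and destroy the disjoint-events decomposition, which is why the regime $\phi(t_0) \geq 1/2$ must be handled separately by the crude tail estimate above. A minor technical nuisance is to justify that the infimum in the definition of $\norm{x}_{(M_i)_i}$ is attained, so that the identity $\sum_i\Ex(Y_i - t_0)_+ = t_0$ holds at $t_0$ rather than merely in a limiting sense; continuity of each $M_i$ under the first-moment hypothesis handles this.
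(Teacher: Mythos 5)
Your proof is correct, but it is worth pointing out that the paper itself does not prove this lemma at all: it merely remarks that the statement generalizes the identically distributed case and defers to \cite{GLSW1}, where the argument is carried out by longer manipulations of distribution functions. Your write-up is therefore a genuinely different, self-contained route: the Fubini identity $M_i(s)=\Ex(s|X_i|-1)_+$, the resulting fixed-point equation $\sum_i\Ex(Y_i-t_0)_+=t_0$ at $t_0=\norm{x}_{(M_i)_i}$, the pointwise bound $\max_i Y_i\le t_0+\sum_i(Y_i-t_0)_+$ for the upper estimate (which, as your argument shows, needs no independence), and the dichotomy in $\phi(t_0)=\sum_i\Pv(Y_i>t_0)$ with the disjoint events $A_i$ for the lower estimate. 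This buys explicit constants ($c_2=2$ and $c_1=\min(1-e^{-1/2},e^{-1})$, using $1-p\ge e^{-2p}$ for $p\le\tfrac12$ and $\prod_i(1-p_i)\le e^{-\sum_i p_i}$, both valid) and covers the non-identically distributed case directly, which is exactly what the paper needs. Two small points should be made explicit: the degenerate case $t_0=0$ forces $x_iX_i=0$ a.s.\ for all $i$, so both sides vanish; and for $t_0>0$ the map $t\mapsto\sum_i\Ex(Y_i-t)_+-t$ is continuous (dominated convergence, using the finite first moments) and strictly decreasing, so equality does hold at the infimum — precisely the technical point you flagged, and it is handled as you indicate.
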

\vskip 3mm

Lemma \ref{thm:wichtig_orlicz} is a generalization of the same result for
identically distributed random variables \cite{GLSW1}. It can be generalized
from the $\ell_{\infty}$-norm to Orlicz norms.
\par
We use the fact \cite{RuW} that for all $s>0$ 
\begin{equation}\label{eqn:integral}
\frac{\sqrt{2\pi}}{(\pi-1)x+\sqrt{x^{2}+2\pi}}e^{-\tfrac{1}{2}x^{2}}
\leq
\sqrt{\tfrac{2}{\pi}}\int_{x}^{\infty}e^{-\tfrac{1}{2}s^{2}}ds
\leq
\sqrt{\tfrac{2}{\pi}}\tfrac{1}{x}e^{-\frac{1}{2}x^{2}} . 
\end{equation} 
\vskip 3mm

\begin{proof}(Theorem \ref{thm:orlicz_darstellung})
	We apply Lemma \ref{thm:wichtig_orlicz} to the random variables
	$$
	X_{i}
	=\left(\sum_{j=1}^{n}|a_{i,j}g_{i,j}|^{2}\right)^{\frac{1}{2}}
	\hskip 20mm i=1,\dots,n.
	$$
Now, it is enough to show that $M_i\sim N_i$ for all $i=1,\dots,n$. We have two  cases.
\par
We consider first $s<\frac{1}{2}\left(\Ex\left(\sum_{j=1}^na_{i,j}^2g_{i,j}^2\right)^{\frac{1}{2}}\right)^{-1}$. 
There are constants $c_{1},c_{2}>0$ such that for all $u$ with $u>2\Ex\left(\sum\limits_{j=1}\limits^na_{i,j}^2g_{i,j}^2\right)^{\frac{1}{2}}$ 
\begin{equation}\label{eqn:w_keit}
\exp\left(-c_{1}^{2}\frac{u^2}{\max\limits_{j=1,\dots,n}a_{i,j}^2}\right)
\leq	\Pv\left(\left(\sum\limits_{j=1}\limits^na_{i,j}^2g_{i,j}^2\right)^{\frac{1}{2}}\geq u\right)\leq  \exp\left(-c_{2}^{2}\frac{u^2}{\max\limits_{j=1,\dots,n}a_{i,j}^2}\right).
\end{equation}
The right-hand side inequality follows from (\ref{GaussConcPi}).
The left-hand side inequality follows from
$$
\sum_{j=1}^{n}a_{i,j}^{2}g_{i,j}^{2}\geq a_{i,1}^{2}g_{i,1}^{2}.
$$
Since $\frac{1}{t}>2\Ex\left(\sum\limits_{j=1}\limits^{n}a_{i,j}^2g_{i,j}^2\right)^{\frac{1}{2}}$, we can apply (\ref{eqn:w_keit}).  Therefore,
\begin{eqnarray*}
\left.\begin{split}
M_i(s)
	& =\int\limits_{0}\limits^s\left\{\frac{1}{t}\Pv
	\left(\left(\sum\limits_{j=1}\limits^{n}a_{i,j}^2g_{i,j}^2\right)^{\frac{1}{2}}
	\geq\frac{1}{t}\right)+\int\limits_{\frac{1}{t}}^{\infty}\Pv\left(\left(\sum\limits_{j=1}\limits^na_{i,j}^2g_{i,j}^2\right)^{\frac{1}{2}}\geq u\right)du\right\}dt\\
	&\leq \int\limits_{0}\limits^s\left\{\frac{1}{t}\exp\left(-\frac{c_{2}^{2}}{t^2\max\limits_{j=1,\dots,n}a_{i,j}^2}\right)+\int\limits_{\frac{1}{t}}^{\infty}\exp\left(-c_{2}^{2}\frac{u^2}{\max\limits_{j=1,\dots,n}a_{i,j}^2}\right)du\right\}dt.
	\end{split}\right.
\end{eqnarray*}
	By (\ref{eqn:integral})
	$$
	M_i(s)\leq \int\limits_0\limits^{s}\left\{\frac{1}{t}\exp\left(-\frac{c_{2}^{2}}{t^2\max\limits_{j=1,\dots,n}a_{i,j}^2}\right)+t\max\limits_{j=1,\dots,n}a_{i,j}^2\exp\left(-\frac{c_{2}^{2}}{t^2\max\limits_{j=1,\dots,n}a_{i,j}^2}\right)\right\}dt.
	$$
Since 
$\frac{1}{t}>2\Ex\left(\sum\limits_{j=1}\limits^{n}a_{i,j}^2g_{i,j}^2\right)^{\frac{1}{2}}
\geq\sqrt{\frac{2}{\pi}}\norm{(a_{i,j})_{j=1}^n}_2$, 
we get
$$
\frac{1}{t}+t\max\limits_{j=1,\dots,n}a_{i,j}^2 
\leq \frac{1}{t}+\sqrt{\frac{\pi}{2}}
\frac{1}{\norm{(a_{i,j})_{j=1}^n}_2}\norm{(a_{i,j})_{j=1}^n}_2^2
\leq\frac{3}{t}.
$$
Thus,
$$
\frac{1}{t}\leq\frac{1}{t}+t\max\limits_{j=1,\dots,n}a_{i,j}^2
\leq\frac{3}{t}.
$$
Altogether, we get
$$
 M_i(s)
	\leq \int\limits_0\limits^{s}\frac{3}{t}\exp\left(-\frac{c_{2}^{2}}{t^2\max\limits_{j=1,\dots,n}a_{i,j}^2}\right)dt
	= \int\limits_{\frac{1}{s}}\limits^{\infty}\frac{3}{u}\exp\left(-\frac{c_{2}^{2}u^{2}}{\max\limits_{j=1,\dots,n}a_{i,j}^2}\right)du.
$$
Passing to a new constant $c_{2}$ and using (\ref{eqn:integral}) we get for all
$s$ with 
$0\leq s<\frac{1}{2}\left(\Ex\left(\sum_{j=1}^na_{i,j}^2g_{i,j}^2\right)^{\frac{1}{2}}\right)^{-1}$
\begin{equation}\label{M1stPart}
M_i(s)
\leq3\int\limits_{\frac{1}{s}}\limits^{\infty}\exp\left(-\frac{c_{2}^{2}u^{2}}{\max\limits_{j=1,\dots,n}a_{i,j}^2}\right)du
\leq \frac{s}{c_{2}}(\max\limits_{j=1,\dots,n}|a_{i,j}|)\exp\left(-\frac{c_{2}^{2}}{s^2\max\limits_{j=1,\dots,n}a_{i,j}^2}\right).
\end{equation}
From this and the definition of $N_{i}$ we get that there is a constant $c$ such that for all $s$ with 
$0\leq s<\frac{1}{2}\left(\Ex\left(\sum_{j=1}^na_{i,j}^2g_{i,j}^2\right)^{\frac{1}{2}}\right)^{-1}$
$$
M_{i}(s)\leq N_{i}(c_{}s).
$$
Indeed, the inequality follows immediately from
(\ref{M1stPart}) provided that $\frac{s}{c_{2}}\leq\frac{1}{2}\left(\sum_{j=1}^n|a_{i,j}|^2\right)^{-\frac{1}{2}}$. If 
$\frac{c_{2}}{2}\left(\sum_{j=1}^n|a_{i,j}|^2\right)^{-\frac{1}{2}}
\leq s\leq
\frac{1}{2}\left(\Ex\left(\sum_{j=1}^na_{i,j}^2g_{i,j}^2\right)^{\frac{1}{2}}\right)^{-1}$
then, by (\ref{M1stPart}) and $\sqrt{\frac{2}{\pi}}\max_{1\leq j\leq n}a_{i,j}\leq\Ex\left(\sum_{j=1}^na_{i,j}^2g_{i,j}^2\right)^{\frac{1}{2}}$,
$$
M_{i}(s)
\leq\frac{2\max\limits_{j=1,\dots,n}a_{i,j}}{c_{2}\Ex\left(\sum_{j=1}^na_{i,j}^2g_{i,j}^2\right)^{\frac{1}{2}}}
\exp\left({-\frac{c_{2}^{2}(\Ex\left(\sum_{j=1}^na_{i,j}^2g_{i,j}^2\right)^{\frac{1}{2}})^{2}}{4\max\limits_{j=1,\dots,n}a_{i,j}^2}}\right)
\leq\frac{\sqrt{2\pi}}{c_{2}}.
$$
Moreover,
$$
\frac{\sqrt{2\pi}}{c_{2}}
\leq
N_{i}\left(\left(\frac{\sqrt{2\pi}}{c_{2}}+1\right)\|(a_{i,j})_{j=1}^{n}\|_{2}^{-1}\right).
$$
Therefore, with a universal constant $c$
the inequality $M_{i}(s)\leq N_{i}(cs)$ also holds for those values of $s$.
The inverse inequality is treated in the same way.
\par
Now we consider $s$ with $s\geq\frac{1}{2}\left(\Ex\left(\sum\limits_{j=1}\limits^{n}a_{i,j}^2g_{ij}^2\right)^{\frac{1}{2}}\right)^{-1}$ and denote
$\alpha=\Ex\left(\sum\limits_{j=1}\limits^{n}a_{ij}^2g_{ij}^2\right)^{\frac{1}{2}}$.
The following holds
\begin{eqnarray*}
	\left.\begin{split}
	& M_i(s)=\int\limits_{0}\limits^s\left\{\frac{1}{t}\Pv\left(\left(\sum\limits_{j=1}\limits^{n}a_{i,j}^2g_{i,j}^2\right)^{\frac{1}{2}}\geq\frac{1}{t}\right)+\int\limits_{\frac{1}{t}}^{\infty}\Pv\left(\left(\sum\limits_{j=1}\limits^na_{i,j}^2g_{i,j}^2\right)^{\frac{1}{2}}\geq u\right)du\right\}dt\\
	& =\int\limits_{0}\limits^{\frac{1}{2\alpha}}\left\{\frac{1}{t}\Pv\left(\left(\sum\limits_{j=1}\limits^{n}a_{i,j}^2g_{i,j}^2\right)^{\frac{1}{2}}\geq\frac{1}{t}\right)+\int\limits_{\frac{1}{t}}^{\infty}\Pv\left(\left(\sum\limits_{j=1}\limits^na_{ij}^2g_{ij}^2\right)^{\frac{1}{2}}\geq u\right)du\right\}dt\\
	&\quad\quad\quad+\int\limits_{\frac{1}{2\alpha}}\limits^s\left\{\frac{1}{t}\Pv\left(\left(\sum\limits_{j=1}\limits^{n}a_{i,j}^2g_{i,j}^2\right)^{\frac{1}{2}}\geq\frac{1}{t}\right)+\int\limits_{\frac{1}{t}}^{\infty}\Pv\left(\left(\sum\limits_{j=1}\limits^na_{i,j}^2g_{i,j}^2\right)^{\frac{1}{2}}\geq u\right)du\right\}dt.
	\end{split}\right.
\end{eqnarray*}
By (\ref{M1stPart}) the first summand is of the order
$$
\frac{\max\limits_{j=1,\dots,n}|a_{i,j}|}
{\Ex\left(\sum\limits_{j=1}\limits^{n}a_{i,j}^2g_{i,j}^2\right)^{\frac{1}{2}}}\exp\left(-\frac{\left(\Ex\left(\sum\limits_{j=1}\limits^{n}a_{i,j}^2g_{i,j}^2\right)^{\frac{1}{2}}\right)^2}{\max\limits_{j=1,\dots,n}a_{i,j}^2}\right).
	$$
	We estimate the second summand. The second summand is less than or equal to
	\begin{eqnarray*}
	\left.\begin{split}
\int\limits_{\frac{1}{2\alpha}}\limits^s\left\{\frac{1}{t}+\Ex\left(\sum\limits_{j=1}\limits^na_{ij}^2g_{ij}^2\right)^{\frac{1}{2}}\right\}dt 
\leq \int\limits_{\frac{1}{2\alpha}}\limits^s3\Ex\left(\sum\limits_{j=1}\limits^na_{i,j}^2g_{i,j}^2\right)^{\frac{1}{2}}dt
	\leq3\Ex\left(\sum\limits_{j=1}\limits^na_{i,j}^2g_{i,j}^2\right)^{\frac{1}{2}}s.	\end{split}\right.
\end{eqnarray*}
Therefore, with a universal constant $c$ we have for all $s$ with
$s\geq\frac{1}{2}\left(\Ex\left(\sum\limits_{j=1}\limits^{n}a_{i,j}^2g_{ij}^2\right)^{\frac{1}{2}}\right)^{-1}$
$$
M_{i}(s)\leq (c-1) s \left(\sum_{j=1}^{n}|a_{i,j}|^{2}\right)^{\frac{1}{2}}
\leq c s \left(\sum_{j=1}^{n}|a_{i,j}|^{2}\right)^{\frac{1}{2}}-1
\leq N_{i}(cs).
$$
Now, we give a lower estimate. By (\ref{OrlRandom}), for all
$s$ with $s\geq\frac{2}{\alpha}$
$$
M_i(s)\geq\int\limits_{\frac{2}{\alpha}}\limits^s\int\limits_{\frac{1}{t}\leq\left(\sum\limits_{j=1}\limits^na_{i,j}^2g_{i,j}^2\right)^{\frac{1}{2}}}\left(\sum\limits_{j=1}\limits^na_{i,j}^2g_{i,j}^2\right)^{\frac{1}{2}}d\Pv dt
\geq\int\limits_{\frac{2}{\alpha}}\limits^s\int\limits_{\frac{\alpha}{2}
	\leq\left(\sum\limits_{j=1}\limits^na_{i,j}^2g_{i,j}^2\right)^{\frac{1}{2}}}\left(\sum\limits_{j=1}\limits^na_{i,j}^2g_{ij}^2\right)^{\frac{1}{2}}d\Pv dt.
$$
By the definition of $\alpha$
\begin{eqnarray*}
	\left.\begin{split}
	M_{i}(s)	&\geq \frac{1}{2}\int\limits_{\frac{2}{\alpha}}\limits^s\Ex\left(\sum\limits_{j=1}\limits^{n}a_{i,j}^2g_{i,j}^2\right)^{\frac{1}{2}}dt\\
	&=\frac{1}{2}\Ex\left(\sum\limits_{j=1}\limits^{n}a_{i,j}^2g_{i,j}^2\right)^{\frac{1}{2}}\left(s-2\left(\Ex\left(\sum\limits_{j=1}\limits^{n}a_{i,j}^2g_{i,j}^2\right)^{\frac{1}{2}}\right)^{-1}\right)\\
	&=\frac{1}{2}\Ex\left(\sum\limits_{j=1}\limits^{n}a_{i,j}^2g_{i,j}^2\right)^{\frac{1}{2}}s-1.
	\end{split}\right.
\end{eqnarray*}
The rest is done as in the case of the upper estimate.
\end{proof}
\vskip 3mm

\bibliographystyle{plain}

\end{document}